\title{Collectives: Compositional protocols for\\contributions and returns}
\author{Nelson Niu and David I. Spivak}
\date{}
\begin{document}

\maketitle

\chapter{Introduction}
\label{sec:intro}

We introduce a concept called a \emph{collective}: an interface with a protocol for aggregating contributions and distributing returns.
Through such a protocol, many members may participate in a mutual endeavor.%
\footnote{The idea has a very simple mathematical description---a collective is a $\otimes$-monoid in $\poly$, the category of polynomial functors---but we do not require the reader to have any background in polynomial functors to understand this note.
We direct those who would like to learn more to \cite{poly_book}, in preparation.}

Here is one such collective: each member contributes an amount of money, and these are pooled into a single investment; then a return on this investment is divided among the members in a manner proportionate to their contributions. 
For example, if three members contribute \$2, \$3, and \$5, and the return is \$20, it would be distributed back to the members as \$4, \$6, and \$10. 

In general, a collective constitutes how contributions are aggregated and how returns are accordingly distributed. The above simple case generalizes in many ways: the sort of thing members can contribute---time, work, ideas, resources, etc.---as well as the way these contributions are aggregated and returns distributed can be accordingly rich. The world of collectives is meant to be explored through real-world needs.

We will now give the definition of collective, though we caution that there are several equations that may be difficult to parse and intuit at first. Do not fear: the rest of the paper is designed to explain and explore how collectives work in a variety of examples, as well as provide some theory for building new collectives from old. Let's dive in.

\begin{definition}\label{def.collective}
    A \emph{collective} $\mathscr{C}$ consists of
    \begin{itemize}
        \item a set $C$ of \emph{contributions}; and,
        \item for each $c\in C$, a set $R[c]$ of \emph{returns} on $c$;
    \end{itemize}
    with the following operations:
    \begin{itemize}
        \item a \emph{neutral contribution} $\e\in C$;
        \item an \emph{aggregation} operation $\ast\colon C\times C\to C$ which is associative and has unit $\e$;
        \item for each $a,b\in C$, a \emph{distribution} operation \[\left(\frac{a}{a\ast b}, \frac{b}{a\ast b}\right): R[a\ast b]\to R[a]\times R[b]\] 
        that satisfies the following ``cancellation'' equations:%
        \footnote{For any two functions $f\colon X\to Y$ and $g\colon Y\to Z$, we use $f\then g$, read as ``$f$, then $g$,'' to denote their composite $X\to Z$.}
 \begin{gather}
    \frac{a}{a\ast \e}=\id_{R[a]}=\frac{a}{\e\ast a}\label{gath.eq1}\\
    \frac{a\ast b}{(a\ast b)\ast c}\then \frac{a}{a\ast b}=
    \frac{a}{a\ast(b\ast c)}
    \qquad
    \frac{b\ast c}{a\ast(b\ast c)}\then \frac{c}{b\ast c}=
    \frac{c}{(a\ast b)\ast c}\label{gath.eq2}\\
    \frac{a\ast b}{(a\ast b)\ast c}\then \frac{b}{a\ast b}=
    \frac{b\ast c}{a\ast(b\ast c)}\then\frac{b}{b\ast c}\label{gath.eq3}
 \end{gather}
    \end{itemize}
    
We say that a collective is \emph{commutative} if both of the following conditions are met: its aggregation satisfies
\begin{equation}\label{eqn.comm_agg}
    a\ast b=b\ast a \qquad \text{for all } a,b\in C
\end{equation}
(i.e.\ its aggregation is commutative), and its distribution satisfies
\begin{equation} \label{eqn.comm_dis}
    \frac{a}{a\ast b}=\frac{a}{b\ast a} \qquad \text{for all } a,b\in C.
\end{equation}
\end{definition}

We move straight to an example before explaining much about the definition; we recommend ignoring aspects of the notation that are in any way difficult and simply getting an idea for the nature of a collective. Afterward, we will return to parsing the definition.

\begin{example}[Prediction market]
\label{ex.prediction_market}
    Imagine a prediction market as a collective comprised of teams of analysts considering a set $E$ of candidates. Each team contributes a probability distribution $p$ on $E$, which represents how likely they believe each candidate is to win.\footnote{We denote the set of probability distributions on $E$ by $\Dist(E)$.} When some external process chooses a winning candidate $e\in E$ and a total reward to be returned, this winner is communicated to each team, and the reward is paid out according to the probability each team assigned to $e$. 
    
    In this setting, a contribution is a pair $(k,p)$ where $k\in \nn$ is a number of analysts and $p:E\to [0,1]$ is their consensus distribution. For each contribution $(k,p)$ the set of returns is $R[(k,p)]:= E\times \rr_+$, so that a single return is a pair $(e,r)$, where $e\in E$ is a candidate and $r\in\rr_+$ is a positive reward.

    The protocol of the prediction market is then given as an algebraic structure on this interface: any pair of contributions is combined by the weighted average of the predicted probabilities
    \[(k,p)\oplus (l,q) :=\left(k+l, \text{avg}_{k,l}(p,q)\right),\quad\text{where}\quad\text{avg}_{k,l}(p,q):=\frac{kp+lq}{k+l}.\]
    Here we use $\oplus$ to denote the aggregation.
    When a winner $e$ and reward $r$ are chosen, the winner is communicated to each team, and the reward is divided up and distributed in proportion with the contributions:
    \[R[(k,p)\oplus (l,q)]\to R[(k,p)]\times R[(l,q)]\]
    \[(e,r)\mapsto \left(\left(e,\frac{kp(e)r}{kp(e)+lq(e)}\right),\left(e,\frac{lq(e)r}{kp(e)+lq(e)}\right)\right)\]
This operation is ``coassociative,'' by which we mean that if several analysts make up each team, and several teams make up a super-team, then dividing rewards to each team within the super-team and then to each analyst within each team is the same as going all the way from the super-team directly to the analysts who make it up.
Moreover, the contribution satisfies \eqref{eqn.comm_agg} and the distribution satisfies \eqref{eqn.comm_dis}, so this is a commutative collective: its protocol does not depend on the order of the contributions.

In the above case (and it is not unusual in this regard) we can see that although the fraction notation is not literal, it nicely represents the actual distribution function:
\[\frac{(k,p)}{(k,p)\oplus(l,q)}(e,r):=\left(e,\frac{kp(e)r}{kp(e)+lq(e)}\right)\text{ and }\frac{(l,q)}{(k,p)\oplus(l,q)}(e,r):=\left(e,\frac{lq(e)r}{kp(e)+lq(e)}\right).\]
\end{example}

With \cref{ex.prediction_market} guiding intuition, we now move to explain \cref{def.collective} in terms of both notation and concrete interpretation.

\begin{notation}
    Note that $\frac{a}{a\ast b}$ is a purely formal notation for ``the first member's portion of the return, when the first member contributes $a$ and the second member contributes $b$.'' The fraction notation is meant to evoke this:
    \[
    \frac{\cancel{a\ast b}}{(a\ast b)\ast c}\then \frac{a}{\cancel{a\ast b}}=
    \frac{a}{a\ast(b\ast c)}
    \]
     even though it does not always represent a fraction or quotienting operation.
     
     Note that $\frac{a}{a\ast a}$ is ambiguous: it could mean the left or the right distribution. Thus we require that the two variables in the denominator are denoted differently, so the above would be denoted either $\frac{a_1}{a_1\ast a_2}$ or $\frac{a_2}{a_1\ast a_2}$, with $a_1=a_2$ as a side condition.
\end{notation}

\begin{remark}
Cancellation equations \eqref{gath.eq1} come from the diagrams
\[
    \begin{tikzcd}
    R[a*\e]\ar[r, "\frac{a}{a\ast \e}"]\ar[dr, equal]&
    R[a]\times R[\e]\ar[d, "\pi_1"]\\&
    R[a]
    \end{tikzcd}
\qquad
    \begin{tikzcd}
    R[\e]\times R[a]\ar[d, "\pi_2"']&R[\e\ast a]\ar[l, "\frac{a}{\e\ast a}"
    ']\ar[dl, equal]\\
    R[a]
    \end{tikzcd}
\]
and cancellation equations in \eqref{gath.eq2} and \eqref{gath.eq3} come from the diagram
   \[\begin{tikzcd}
	{R[a\ast b\ast c]} && {R[a\ast b]\times R[c]} \\
	\\
	{R[a]\times R[b\ast c]} && {R[a]\times R[b]\times R[c].}
	\arrow["{\left(\frac{a}{a\ast(b\ast c)}, \frac{b\ast c}{a\ast(b\ast c)}\right)}"', from=1-1, to=3-1]
	\arrow["{\left(\frac{a{\ast} b}{(a \ast b)\ast c}, \frac{c}{(a\ast b)\ast c}\right)}", from=1-1, to=1-3]
	\arrow["{\left(\frac{a}{a\ast b}, \frac{b}{a\ast b}\right)\times R[c]}", from=1-3, to=3-3]
	\arrow["{R[a]\times\left(\frac{b}{b \ast c}, \frac{c}{b\ast c}\right)}"', from=3-1, to=3-3]
\end{tikzcd}\]
We will refer to \eqref{gath.eq2} and \eqref{gath.eq3} together as the \emph{coassociativity} of the distribution function.
\end{remark}

\begin{remark}
    Notice that the set $C$ together with the neutral contribution $\e\in C$ and the aggregation $*\colon C\times C\to C$ form a familiar algebraic structure called a \emph{monoid}: a set endowed with an associative binary operation that has a unit.
    Aggregating contributions is associative, but note that it is not a priori commutative unless \eqref{eqn.comm_agg} holds: for many collectives, the order of the contributions matters.
    Nevertheless, associativity suggests that we can think of the aggregation function 
    \[*: C^n\to C\] 
    as an operation that takes $n$ inputs, corresponding to any number $n\in\nn$ of members in the collective.
    
    A collective, then, is a monoid $C$ endowed with extra structure: a set associated with each $c\in C$ and an operation on these sets going in the direction \emph{opposite} to the monoid operation, yet in a coherent way.
    If a monoid gives a way of aggregating contributions associatively, the extra structure of a collective tells us the possible returns on each contribution, as well as how to distribute any return on an aggregate contribution ``coassociatively'' to the members in a manner compatible with the aggregation method.
    
    Like with the aggregation function, the coassociativity of the distribution function allows us to think of it as an operation that gives $n$ outputs; or, equivalently, $n$ functions with the same domain, each of the form
    \[
        \frac{a_i}{a_1\ast\cdots\ast a_i\ast\cdots a_n}:R[a_1\ast\cdots\ast a_n]\to R[a_i]
    \]
    for some $i$ from $1$ to $n$. The coassociativity ensures that there is no ambiguity here. 
    
    Even when the aggregation function is commutative, the distribution function may not be symmetric: there exist collectives satisfying \eqref{eqn.comm_agg} but not \eqref{eqn.comm_dis}, i.e.\ collectives that are not commutative even though their underlying monoids are commutative.
    Such a collective remembers the order in which contributions were made and uses this order in the distribution protocol, even when the aggregation protocol has forgotten it.
    See \cref{ex.comm_list_monoid} for an example.
    \end{remark}

\begin{notation} \label{not.interface}
\delme{
We call the set of contributions together with the set of returns on each contribution the \emph{interface} of the collective.
We denote the interface as a polynomial or a power series in a single variable, $\yon$, as follows. Instead of numbers, our coefficients will be singleton sets of contributions, and our exponents will be sets the corresponding sets of returns:%
\[
    \sum_{c\in C}\{c\}\yon^{R[c]}.
\]
Some useful sets of returns are the ordinals: let $\ord{n}$ denote the set with $n$ elements $\ord{n}=\{0,\ldots,n-1\}$.

When clear from context, we may omit the coefficient sets and simply write
\[
    \sum_{c\in C}\yon^{R[c]}.
\]
Much like with standard polynomials, the coefficient in front of $\yon^{R[c]}$ should be thought of as a $\1$, a singleton set---which, after all, is isomorphic to the omitted coefficient $\{c\}$.

In cases where multiple contributions have the same set of returns, we combine terms with the same exponent by taking the (disjoint) union of their coefficients.
For example, we can write an interface with contributions $C=\{c_1,c_2,c_3,d_1,d_2\}$ with return sets $R$ on each $c_i$ and $S$ on each $d_j$ as
\[
    \{c_1\}\yon^{R}+\{c_2\}\yon^{R}+\{c_3\}\yon^{R}+\{d_1\}\yon^{S}+\{d_2\}\yon^{S}=\{c_1,c_2,c_3\}\yon^R+\{d_1,d_2\}\yon^S.
\]
As another example, the interface of a collective in which every set of returns is $R$ can be written as $C\yon^R$.

This polynomial representation of the interface of a collective is rather like a generating function. It is notation that is convenient for packaging up the data of a collective.
Later, in \cref{sec:making}, we will see how multiplication and composition of polynomials correspond to very natural operations on collectives with those interfaces.
}
We call the set of contributions together with the set of returns on each contribution the \emph{interface} of the collective.
We denote the interface as a polynomial or a power series in a single variable, $\yon$, as follows. Instead of numbers, our exponents will be sets---specifically, sets of returns:%
\footnote{Using a polynomial as we do in \eqref{eq.poly_notation} could be viewed as analogous to \emph{generating functions} in combinatorics. It is a convenient way to package the interface (the contributions and returns) of a collective. Indeed, later in \cref{sec:making} we will see how multiplication and composition of polynomials correspond to very natural operations on collectives with those interfaces. The reader will hopefully get used to the notation as we explore it through examples.}
\begin{equation}\label{eq.poly_notation}
\sum_{c\in C}\yon^{R[c]}.
\end{equation}
Just as we can rewrite a standard polynomial $\yon^2+\yon$ as $1\yon^2+1\yon$, we can rewrite our interface polynomials with singleton sets as coefficients, explicitly tagging each term with its corresponding contribution:
\[
\sum_{c\in C}\{c\}\yon^{R[c]}.
\]
This can aid clarity when writing a polynomial without the $\sum$ sign, e.g.\ $\{\text{alice}\}\yon^{\nn}+\{\text{bob}\}\yon^{\{a,b,c\}}+\{\text{carla}\}\yon^{\{a,b,c\}}$.
Much like with standard polynomials, we can condense this notation by ``combining like terms'' (terms with the same exponents)---we take the disjoint union of their coefficients: $\{\text{alice}\}\yon^{\nn}+\{\text{bob}\}\yon^{\{a,b,c\}}+\{\text{carla}\}\yon^{\{a,b,c\}}\iso\{\text{alice}\}\yon^{\nn}+\{\text{bob},\text{carla}\}\yon^{\{a,b,c\}}$.
As another example of the same phenomenon, the interface of a collective in which every set of returns is $R$ can be written as $C\yon^R\cong\sum_{c\in C}\yon^R$.

For $n\in\nn$ we denote the set $\{0,1,\ldots,n-1\}$ by $\ord{n}$. So $\0=\varnothing$, $\1$ is a singleton set, etc.
\end{notation}

\section*{Acknowledgments}
We thank Christian Williams for numerous insightful discussions and suggestions.
We credit \cref{ex.prediction_market} to Spencer Breiner and \cref{ex.probabilities} to David Jaz Myers, while \cref{ex.lists,ex.potluck} were inspired by a conversation with Owen Lynch.
This material is based upon work supported by the Air Force Office of Scientific Research under award number FA9550-20-1-0348.

\chapter{Collective examples}
\label{sec:examples}

\begin{table}[]
    \centering
    \begin{tabular}{l|l|l|l}
         \textbf{Ref.}&\textbf{Carrier}&\textbf{Conditions}&\textbf{Summary} \\\hline
         \ref{ex.prediction_market}&
            $\nn\Dist(E)\yon^{E\rr_+}$&
            $E\in\smset$&
            Reward good predictors\\
        \ref{ex.monoid_linear}&
            $M\yon$&
            $(M,\1,*)$ monoid&
            Aggregate only\\
        \ref{ex.set_repr}&
            $\yon^S$&
            $S\in\smset$&
            Distribute only\\
        \ref{ex.investment_reals}&
            $\yon+\rr_+\yon^{\rr_+}$&
            ---&
            Proportional rewards\\
        \ref{ex.ylist_monoid}&
            $\sum_{x\in\rr_{\geq0}}\yon^{[0,x]}$&
            ---&
            Queue (continuous, homog.)\\
        \ref{ex.lists}&
            $\sum_{n\in\nn}\sum_{\ell\in A^n}\yon^{\Pre(\ell)}$&
            $A\in\smset$&
            Queue (discrete, inhomog.)\\
        \ref{ex.balanced_tasks}&
            $\sum_{n\in\nn}\sum_{\ell\in(\nn^{A})^n}\yon^{\Pre(\ell)}$&
            $A\in\smset$&
            Multiset-queue\\
        \ref{ex.potluck}&
            $\sum_{V\ss U}\yon^{\pow(V)}$&
            $U\in\smset$&
            Union then intersection\\
        \ref{ex.comm_list_monoid}&
            $\sum_{n\in\nn}\yon^{\ord{n}}$&
            ---&
            Product of sets, projections\\
        \ref{ex.coprod}&
            $\sum_{c\in\Ob(\cat{C})}\yon^{\cat{C}[c]}$&
            $\cat{C}\in\smcat$ has coprods&
            U.\ property of coproduct\\
        \ref{ex.cart_closed}&
            $\sum_{c\in\Ob(\cat{C})}\yon^{\cat{C}[c]}$&
            $\cat{C}\in\smcat$ cart.\ closed&
            U.\ property of exponential\\
        \ref{ex.presheaf_coprod}&
            $\sum_{U\in\Cat{Open}(X)}F(U)$&
            $X$ space, $F$ sheaf&
            Union and restriction\\
        \ref{ex.simplices}&
            $\sum_{n\in\Delta_+}\yon^{\Delta_+\op[n]}$&
            Algebraist's simplex&
            Pullback inclusions\\
        \ref{ex.oplax}&
            $\sum_{c\in\Ob(\cat{C})}\yon^{F(c)}$&
            $(\cat{C},\odot)\To{F}(\smset,\times)$ oplax&
            Oplax monoidal structure\\
        \ref{prop.operad}&
            $\sum_{n\in\nn}\cat{O}_n\yon^{\ord{n}}$&
            $\cat{O}$ an operad&
            Add and copy-compose\\
        \ref{ex.probabilities}&
            $\sum_{n\in\nn}\sum_{p\in\Dist(\ord{n})}\yon^{\ord{n}}$&
            ---&
            Sample probability spaces\\
        \ref{ex.traj}&
            $\sum_{v:\rr^2\to\rr^2}\yon^{\rr^2}$&
            ---&
            Add vector to point\\
        \ref{sec.parallel}&
            $p\otimes q$&
            $p,q$ carry collectives&
            Run in parallel\\
        \ref{sec.product}&
            $p\times q$&
            $p,q$ carry collectives&
            Both contribute, one returns\\
        \ref{sec.composite}&
            $p\tri q$&
            $p,q$ carry collectives&
            Run in series\\
        \ref{sec.free}&
            $\sum_{n\in\nn}\sum_{c\in p(1)^n}\yon^{\sum_{i\in\ord{n}}p[c_i]}$&
            $p\in\poly$&
            Free collective on $p$
    \end{tabular}
    \caption{Examples of collectives}
    \label{tab:my_label}
\end{table}

As one can imagine from their generality, there is an abundance of collectives.
To explore practical applications, we first build intuitions using simple, canonical examples.
Many of the examples below can be generalized, specialized, or otherwise modified in interesting ways: we encourage the reader to explore these variations to come up with additional examples on their own.

\begin{example}[Monoids as donation boxes] \label{ex.monoid_linear}
    Any monoid $(M,e,*)$ forms a collective with interface $M\yon^\1$ and a trivial distribution: elements of $M$ are possible contributions, and $*\colon M\times M \to M$ is the aggregation (with unit $e\in M$).
    For each contribution, there is only one possible return (let's imagine it's the singleton set $\1\cong\{\text{go-team!}\}$), so every distribution is the unique map $\1\to\1\times\1$.
    According to our notation, we would write this distribution function as
    \[
        \frac{m}{m*n}(\text{go-team!})=\text{go-team!}=\frac{n}{m*n}(\text{go-team!})
    \]
    for every $m,n\in M$.
    This is a commutative collective if and only if the monoid itself is commutative.
    
    We can think of contributing to this collective as \emph{making a donation} without expecting a return: the donors' contributions are aggregated into a single donation, and every contributor just receives ``go-team!'' regardless of what they contribute. The donations could be monetary or otherwise: people could donate their thoughts in a survey, with the aggregate contribution being a list of answers to a question.
\end{example}

\begin{example}[Sets as distribution lists]
\label{ex.set_repr}
    For any set $S$ there is a collective with interface $\1\yon^S$ that requires nothing for contributions---i.e.\ $C=\1\cong\{\text{present}\}$ is the trivial monoid---and simply distributes messages from the set $R[\text{present}]=S$ as its only function.
    That is, if there are $n$ members of the collective, the distribution function $S\to S\times\cdots\times S$ sends $s\mapsto(s,s,\ldots,s)$. According to our notation, we would write this distribution function as
    \[
        \frac{\text{present}}{\text{present}*\text{present}}(s)=s
    \]
    This collective is commutative.

    We can interpret this collective as a simple \emph{distribution list}: the elements of $S$ can be thought of as messages which are copied and distributed to everyone present.
\end{example}

A simple example of a collective with nontrivial aggregation and distribution is the \emph{stakeholder collective}. This encapsulates the basic idea of \emph{adding up} the contributions and \emph{dividing up} the returns.

\begin{example}[Stakeholders]
\label{ex.investment_reals}

Let $\rr_+$ be the set of positive real numbers, and $\rr_{\geq 0} = \{0\}\cup\rr_+$ be nonnegatives, representing a real-world quantity such as money.

The interface of the \emph{stakeholder} collective is $\{0\}\yon^{\{0\}}+\rr_+\yon^{\rr_+}$.
The aggregation is addition, and the distribution $\rr_{\geq 0} \to \rr_{\geq 0}\times \rr_{\geq 0}$ takes a return and divides it proportionally:
\[
\frac{a}{a+b}(t):=\frac{at}{a+b}
\qqand 
\frac{b}{a+b}:=\frac{bt}{a+b}
\]
where we define $\frac{0}{0}$ to be $0$. Here the fraction notation works very well---and makes it easy to check that this collective is commutative.

We can interpret this collective as follows. Given $n$ stakeholders, each stakeholder $i \in \ord{n}$ contributes an amount $a_i \in \rr_+$ of Resource A, for a total aggregate quantity of $\sum_{i \in \ord{n}} a_i$.
Based on this total, an amount $t \in \rr_+$ of Product T is returned, and each stakeholder receives their part $a_i t / \sum_{i \in \ord{n}} a_i$ of Product T proportional to their contribution of Resource A.

When defining this collective, we wanted the set of returns on the contribution $0$ to be the singleton set $\{0\}$ rather than $\rr_+$ in order to properly handle the case where all members contribute nothing.
Note that \eqref{gath.eq1} guarantees that
\[
    \frac{0}{0+0}=\id_{R[0]},
\]
so if two members each contribute $0$, any return $t\in R[0]$ on their aggregate contribution would be duplicated for both members to receive!
The only way this could make sense would be for $0$ to be the only possible return on a contribution of $0$, and this still plays nicely with cases where some members contribute $0$ and other members contribute positive quantities.
\end{example}

So far, apart from the case of noncommutative monoids, all of our collectives have been commutative, so that the order of contributions does not matter. Yet there are many natural situations which require ordered protocols: reserving a spot, or consolidating ordered data.
The following is an example of a collective that is not commutative, even though its underlying monoid is.

\begin{example}[First come, first served reservations]
\label{ex.ylist_monoid}
We consider another noncommutative collective, this time with interface
\[\sum_{x\in R}\yon^{[0,x]}\]
where $R=\rr_{\geq 0}$ is the set of nonnegative reals and $[a,b]$ denotes the closed interval. We think of $x\in R$ as an amount of time to be reserved by a member of the collective. 

The aggregation is the addition map $(m, n) \mapsto m+n$ (with unit $0 \in R$), which is commutative, satisfying \eqref{eqn.comm_agg}, while the distribution is given by
\[
    \frac{m}{m+n}(d) = \min(d,m) \qqand \frac{n}{m+n}(d) = \max(0,d-m),
\]
which does not satisfy \eqref{eqn.comm_dis}, making this collective noncommutative.

To illustrate the behavior of this collective, imagine that when members contribute $m_1,\dots,m_k$, they are each submitting a request for an amount of time with an important visitor. The total time requested is $m_1+\cdots+m_k$. If the visitor can only spend $d$ hours with the group, then they will spend $m_1$ hours with the first member, $m_2$ with the second, etc., until the $d$-many hours run out. 

Alternatively, one can imagine the members drawing successively higher marks on a beaker; when water is poured, it is distributed to the lower members first.

This whole example works with $R=\nn$ instead of $\rr_{\geq 0}$ as well; in fact, the $R=\nn$ case is a special case of the next example, when $A=\1$.
\end{example}

\begin{example}[First come, first served task scheduler]\label{ex.lists}
Here is another example of a noncommutative collective.
Given a set $A$, there is a collective whose contributions are (finite, possibly empty) lists of elements in $A$ and whose returns on each list $\ell$ are the prefixes of that list.\footnote{A \emph{prefix} of a list $[\ell_1,\ell_2,\ldots,\ell_n]$ is the list of the first $i$ terms $[\ell_1,\ldots,\ell_i]$ for some $0\leq i\leq n$. We denote the set of prefixes of a list $\ell$ by $\Pre(\ell)$.} The aggregation is the concatenation map, which we denote by $\concat$, whose unit is the empty list, which we denote by $[]$.
The distribution can be described as follows.

Given lists $k\coloneqq[k_1,\ldots,k_m]$ and $\ell\coloneqq[\ell_1,\ldots,\ell_n]$, a prefix of $k\concat\ell$ either has the form $[k_1,\ldots,k_i]$ for some $i\leq m$ or $k\concat[\ell_1,\ldots,\ell_j]$ for some $j\geq1$.
In the first case, we define
\[
    \frac{k}{k\concat\ell}([k_1,\ldots,k_i]) = [k_1,\ldots,k_i] \qqand \frac{\ell}{k\concat\ell}([k_1,\ldots,k_i]) = [],
\]
whereas in the second case, we define
\[
    \frac{k}{k\concat\ell}(k\concat[\ell_1,\ldots,\ell_j]) = k \qqand \frac{\ell}{k\concat\ell}(k\concat[\ell_1,\ldots,\ell_j]) = [\ell_1,\ldots,\ell_j].
\]
We think of this as a \emph{task-scheduling} collective. Each member contributes a list of tasks for the collective, elements of some fixed set $A$. The task-lists are aggregated to make one long list of tasks, concatenated in the order of contributions. Then, the first few of those tasks are completed, and everyone receives the tasks on their own list that were completed.
\end{example}

One might have the idea that this is not very fair: the first member to make a contribution gets too much advantage. Here is a version that takes a more balanced approach.

\begin{example}[Balanced task scheduler]\label{ex.balanced_tasks}
Again fix a set of tasks $A$, and consider the set $\nn^A$ of multisets of $A$.%
\footnote{A multiset $f\colon A\to\nn$ is like subset of $A$, except that the same element $a\in A$ can be chosen multiple times: specifically, each element $a$ appears $f(a)$-many times.}
We denote the empty multiset by $0$ and the union of two multisets $k_1,k_2$ by $k_1+ k_2$; it is given by $(k_1+k_2)(a)\coloneqq k_1(a)+k_2(a)$. 
Consider the collective for which a contribution is a list of multisets in $A$, and whose returns at such a list is the set of its prefixes, as in \cref{ex.lists}. We consider a list to be equivalent to any list that can be obtained from it by adding $0$'s at the end.

The aggregation function takes two lists $[k_1,\ldots,k_i]$ and $[\ell_1,\ldots,\ell_j]$, replaces them with two lists of the same length $n=\max(i,j)$ by taking the missing multisets to be $0$, and produces
\[
[k_1+\ell_1,k_2+\ell_2,\ldots,k_n+\ell_n].
\]
A return on this aggregated list of multisets is just one of its prefixes, which we distribute to each contributing member as a prefix of their own contributed list of the same length.
So the distribution function maps a return
\[
    [k_1+\ell_1,k_2+\ell_2,\ldots,k_m+\ell_m]
\]
for some $0\leq m\leq n$ to the returns
\[
    ([k_1,k_2,\ldots,k_m],[\ell_1,\ell_2,\ldots,\ell_m]).
\]

The semantics of this collective are as follows. Each member contributes a list of bundles (multisets) of tasks, ordered by priority, so that the tasks in the first bundle of the list are of the highest priority, the tasks in the second bundle of the second highest priority, and so on. Contributions are aggregated fairly in the sense that all of the highest-priority tasks in $k$ and all of the highest-priority tasks in $\ell$ together form the bundle of highest-priority tasks in $k+\ell$. A return on a list of task bundles is the completion of zero or more of these bundles; the collective always finishes any bundle they start (so the elements of $A$ had better be eminently doable tasks!). Then each completed task bundle is distributed to the member that completed it.

Note that this collective is commutative, and it is precisely this commutativity that ensures this collective (unlike the previous example) gives no comparative advantages or disadvantages based on the members' contribution order.
\end{example}

\begin{example}[Potluck planner]\label{ex.potluck}
Given a set $U$, we give a commutative collective with interface
\[
    \sum_{V\ss U}\yon^{\pow(V)},
\]
where $\pow(V)$ is the power set of $V$.
So the possible contributions are the subsets of $U$, and the possible returns on each contribution are in turn the subsets of that contribution.

The aggregation is the union $(V, W) \mapsto V \cup W$ (with unit $\varnothing\ss U$), while the distribution is given by
\[
    \frac{V}{V\cup W}(X) = V \cap X \qqand \frac{W}{V\cup W}(X) = W \cap X.
\]

We can interpret this collective as a \emph{potluck planner}, as follows. Each member contributes a collection of dishes that they could bring. The collections are aggregated into a menu, from which only a subset of desired dishes are chosen. This subset is distributed back to the members so that each member is instructed to bring every desired dish that they offered to make.

Note that if we follow this protocol, we may end up asking multiple members to bring the same dish---in fact, every member who offered to bring a desired dish will be asked to bring that dish.
If we wanted to avoid this scenario, we could instead define the distribution to be
\[
    \frac{V}{V\cup W}(X) = V \cap X \qqand \frac{W}{V\cup W}(X) = (W \cap X) \setminus V,
\]
so that the \emph{first} member to offer a desired dish is always the one asked to bring it.
Or perhaps we want to incentivize members to sign up early, in which case we could define the distribution in yet another way:
\[
    \frac{V}{V\cup W}(X) = (V \cap X)\setminus W \qqand \frac{W}{V\cup W}(X) = V\cap X,
\]
so that the \emph{last} member to offer a desired dish is always the one asked to bring it.
These two alternatives are still collectives, but they are no longer commutative: although the aggregation is unchanged and thus still satisfies \eqref{eqn.comm_agg}, the distribution no longer satisfies \eqref{eqn.comm_dis}.
We have sacrificed commutativity to eliminate redundancy.
\end{example}

\begin{example}[Single-question survey]
\label{ex.comm_list_monoid}

We consider a collective with interface
\[\sum_{n\in\nn}\yon^{\ord{n}}=\yon^\0+\yon^\1+\yon^\2+\cdots.\]
A contribution is a natural number, and the aggregation is multiplication, with unit $1 \in \nn$. The possible returns on a contribution $n\in\nn$ are the natural numbers less than $n$.
The left distribution $\frac{m}{m\cdot n}$ sends $i\mapsto i\bmod m$, while the right distribution $\frac{n}{m\cdot n}$ sends $i\mapsto i\bdiv m$.

If $n$ members are contributing, we can think of their aggregate contribution as an $n$-dimensional grid: each member contributes the size of their own dimension. Let's suppose $n=2$ and we have two members contributing $7$ and $4$:
\[
\begin{tikzpicture}[scale=.5]
    \foreach \i in {0,...,6} {
        \foreach \j in {0,...,3} {
        \node at (\i,\j) (a\i\j) {$\bullet$};
        }
    }
    \node[circle, inner sep=.4pt, draw, fit=(a52)] {};
    \foreach \i in {0,...,6} {
        \node at (\i, -1) {\i};
    }
    \foreach \j in {0,...,3} {
        \node at (-1, \j) {\j};
    }
\end{tikzpicture}
\]
The resulting returns are integers between $0$ and $27$, distributed by projection. For example, the circled node represents $19$ and is distributed as $19\bmod 7=5$ and $19\bdiv 7=2$.

We can interpret this collective as follows. Given $n$ interviewers, each interviewer $i\in\ord{n}$ contributes a question with exactly $a_i\in\nn$ possible answers to choose from.
These questions are aggregated into a single question with $\prod_{i\in\ord{n}}a_i$ possible answers, each corresponding to a combination of answers for the questions that were originally contributed.
When that single question is answered, the results for each original question are distributed back to the interviewer who asked that question.

Perhaps surprisingly, this collective is not commutative.
This is because the way the projection map is defined differs depending on which contribution is made first.

This example may not be the most satisfying way to construct a survey: these aggregate single-question surveys will quickly become very unwieldy to answer!
In \cref{ex.multi-question}, we will see a natural way to turn our single-question survey collective into a multi-question survey collective.
\end{example}

The following examples are more abstract, intended for those familiar with category theory.

\begin{example}[Categories with coproducts] \label{ex.coprod}
Let $\cat{C}$ be a category with finite coproducts given by $+$.
For each $c\in\Ob\cat{C}$, define
\begin{equation}\label{eqn.not_C[c]}
    \cat{C}[c]\coloneqq\coprod_{c'\in\Ob\cat{C}}\Hom_{\cat{C}}(c,c')
\end{equation}
to be the set of all morphisms in $\cat{C}$ with domain $c$.
Then there is a commutative collective with interface
\[
    \sum_{c\in\Ob\cat{C}}\yon^{\cat{C}[c]}.
\]
Here the aggregation is the coproduct operation $(c,d)\mapsto c+d$ (whose unit is the initial object).
To ensure that the aggregation is well-defined, we fix a representative for each isomorphism class of $\cat{C}$ that the coproduct operation will pick out.
Meanwhile, the distribution sends every morphism $f\colon c+d\to e$ in $\cat{C}[c+d]$ to its restrictions along $c$ and $d$:
\[
    \frac{c}{c+d}(f)=\iota_c\then f \qqand \frac{d}{c+d}(f)=\iota_d\then f,
\]
where $\iota_c\colon c\to c+d$ and $\iota_d\colon d\to c+d$ are the canonical coproduct inclusions. 
\end{example}

\begin{example}[Cartesian closed categories]\label{ex.cart_closed}
Let $\cat{C}$ be a cartesian closed category, and for each $c\in\Ob\cat{C}$, define $\cat{C}[c]$ as in \eqref{eqn.not_C[c]}.
Then there is a commutative collective with the same interface
\[
    \sum_{c\in\Ob\cat{C}}\yon^{\cat{C}[c]}
\]
as in \cref{ex.coprod}, but where the aggregation is the product operation $(c,d)\mapsto c\times d$ (whose unit is the terminal object).
Again, we fix a representative for each isomorphism class of $\cat{C}$ that the product operation will pick out.
Meanwhile the distribution $\cat{C}[c\times d]\to\cat{C}[c]\times\cat{C}[d]$ sends every morphism $f\colon c\times d\to e$ to its curried forms, $c\to e^d$ and $d\to e^c$.
To ensure that the distribution is well-defined, we also fix a representative for each isomorphism class of $\cat{C}$ that the exponentiation operation will pick out.
\end{example}

\begin{example}[Presheaves on topological spaces]\label{ex.presheaf_coprod}
Let $X$ be a topological space with open sets $\mathcal{O}(X)$, and let $F$ be a presheaf of sets on $X$.
Then there is a commutative collective with interface
\[
    \sum_{U\in\mathcal{O}(X)}\yon^{F(U)}
\]
where the aggregation is the union $(U,V)\mapsto U\cup V$ of open sets (whose unit is the empty set), while the distribution $F(U\cup V)\to F(U)\times F(V)$ is given by the restrictions:
\[
    \frac{U}{U\cup V}=\res_{U}^{U\cup V}\qqand\frac{V}{U\cup V}=\res_{V}^{U\cup V}.
\]

We can replace $\mathcal{O}(X)$ with an arbitrary category with finite coproducts (again, we will choose a fixed representative in the isomorphism class of every coproduct in $\cat{C}$ for the aggregation to pick out).
In fact, \cref{ex.coprod} is a special case of this generalization.
We will further generalize this example in \cref{ex.oplax}.
\end{example}

\begin{example}[Simplices]\label{ex.simplices}
Let $\Delta_+$ denote the category of finite ordered sets and order-preserving maps. Its objects form the contributions of a collective, with aggregation $+$, and the returns on each object are its incoming morphisms.

That is, consider the set $C\coloneqq\nn$ of objects $n\in\Delta_+$, which we call \emph{simplices}, and for each one, consider the set \[R[n]\coloneqq\sum_{m\in\nn}\Big\{f\colon \{1,\ldots,m\}\to\{1,\ldots,n\}\;\Big|\; i\leq j\implies f(i)\leq f(j)\Big\}\]
of incoming morphisms; we refer to elements of $R[n]$ as the \emph{faces} of simplex $n$. Then define the aggregation map to be ordinary sum, $(n,n')\mapsto n+n'$, with unit $0\in C$.
A return on $n+n'$ is a natural number $m\in\nn$ and an order-preserving map $g\colon \{1,\ldots,m\}\to\{1,\ldots,n+n'\}$. It is distributed as
\[
\frac{n}{n+n'}(g)=f
\qqand
\frac{n'}{n+n'}(g)=f'
\]
where $f$ and $f'$ are given by taking pullbacks
\[
\begin{tikzcd}
\bullet\ar[r]\ar[d, "f"']\ar[dr,phantom, near start, "\lrcorner"]&\{1,\ldots,m\}\ar[d, "g"]&
\bullet\ar[l]\ar[d,"f'"]\ar[dl,phantom, near start, "\llcorner"]\\
\{1,\ldots,n\}\ar[r]&\{1,\ldots,n,n{+}1,\ldots,n{+}n'\}\ar[from=r]&\{n{+}1,\ldots,n{+}n'\}
\end{tikzcd}
\]

From the point of view of simplicial sets, each member contributes a simplex (the convex hull of some number of points). These are aggregated by taking the convex hull of the union of points. A return is simply a face of that simplex, and it is distributed to the contributing members based on which of its points each contains. 

This looks like it could be a special case of \cref{ex.presheaf_coprod}, but it is not; indeed $+$ is not a coproduct in $\Delta_+$.
But in fact, both \cref{ex.presheaf_coprod} and this example are special cases of the generalization below.
\end{example}

\begin{example}[Oplax monoidal functors to the cartesian category of sets]\label{ex.oplax}
Let $\cat{C}$ be a monoidal category with unit $I$ and monoidal product $\odot$.
Then given an oplax monoidal functor $F\colon(\cat{C},I,\odot)\to(\smset,\ord{1},\times)$, there is a collective with interface
\[
    \sum_{c\in\Ob\cat{C}}\yon^{F(c)}.
\]
We fix a representative for each isomorphism class of $\cat{C}$ and say that the aggregation sends $(c,d)$ to the representative of the isomorphism class containing $c\otimes d$.
Meanwhile, we define the distribution to be the morphism $F(c\odot d)\to F(c)\times F(d)$ given by the oplax monoidal functor.
\end{example}

The morphisms in $\cat{C}$ are not being used in \cref{ex.oplax}. Indeed, \emph{every} collective is an example of \cref{ex.oplax} with $\cat{C}$ discrete, as we observe below.

\begin{proposition}
A collective is an oplax monoidal functor from a discrete monoidal category to the cartesian category of sets.
\end{proposition}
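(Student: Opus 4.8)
The plan is to exhibit a dictionary between the data of a collective and the data of an oplax monoidal functor $F\colon(\cat{C}, I, \odot)\to(\smset, \ord{1}, \times)$ with $\cat{C}$ discrete, and then check that the defining axioms on each side correspond under this dictionary. Since \cref{ex.oplax} already produces a collective from any such functor (the discrete case being a special case), the content is to show that the two descriptions carry exactly the same information, so that the proposition reduces to matching up the coherence conditions with \eqref{gath.eq1}, \eqref{gath.eq2}, and \eqref{gath.eq3}.

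First I would unpack the structure on the functor side. A discrete monoidal category is nothing but a monoid: its objects form a set $C$, its monoidal product $\odot$ is a binary operation with unit $I$, and because the only morphisms in a discrete category are identities, the associator and unitors---being isomorphisms---must themselves be identities, so $\odot$ is strictly associative and strictly unital. This matches the monoid $(C,\e,\ast)$ of \cref{def.collective} via $\e=I$ and $\ast=\odot$. A functor $F\colon\cat{C}\to\smset$ out of a discrete category is just a family of sets, one per object, which we read as the return sets $R[c]\coloneqq F(c)$ (there is no morphism data to specify). The oplax structure consists of comparison maps $\mu_{c,d}\colon F(c\odot d)\to F(c)\times F(d)$ together with a counit $\epsilon\colon F(I)\to\ord{1}$; we read $\mu_{c,d}$ as the distribution $\bigl(\frac{c}{c\ast d},\frac{d}{c\ast d}\bigr)$, and we observe that $\epsilon$ is forced---it is the unique map to the terminal set $\ord{1}$---so it contributes no data.

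It then remains to match the coherences. The oplax \emph{unit} coherences say that composing $\mu_{I,c}$ (resp.\ $\mu_{c,I}$) with $\epsilon\times\id$ and the left (resp.\ right) unitor of $(\smset,\times)$ recovers $\id_{F(c)}$; since those unitors are the canonical projections and $\epsilon$ lands in the terminal set, this composite is simply $\frac{c}{\e\ast c}$ (resp.\ $\frac{c}{c\ast\e}$), so the unit coherences are precisely \eqref{gath.eq1}. The oplax \emph{associativity} coherence is a single commuting square, and the point is that it is literally the coassociativity square displayed in the remark following \cref{def.collective}: strictness forced by discreteness lets us treat $(a\ast b)\ast c$ and $a\ast(b\ast c)$ as the same object, so no associator of $\cat{C}$ intervenes, while the associator of $(\smset,\times)$ is canonical and invisible once we pass to components.

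The step I expect to be the crux is verifying that this one coherence square encodes exactly the three equations \eqref{gath.eq2} and \eqref{gath.eq3}, no more and no less. The key observation is that a map into $R[a]\times R[b]\times R[c]$ is determined by its three projections, so the square commutes if and only if its $R[a]$-, $R[b]$-, and $R[c]$-components agree: the $R[a]$-component yields the first equation of \eqref{gath.eq2}, the $R[c]$-component yields the second, and the $R[b]$-component yields \eqref{gath.eq3}. Conversely, running the dictionary backwards turns any oplax monoidal functor with discrete domain and target $(\smset,\ord{1},\times)$ into data satisfying \eqref{gath.eq1}--\eqref{gath.eq3}, i.e.\ a collective. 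Since the two assignments carry each structure map to itself, they are mutually inverse, which establishes the identification asserted by the proposition.
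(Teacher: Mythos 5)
Your proof is correct and follows the same route as the paper's: the paper likewise identifies a discrete monoidal category with a monoid $(C,\e,\ast)$, the functor data with the return sets and distribution maps, and states that the oplax associativity and unitality coherences correspond precisely to the cancellation equations \eqref{gath.eq1}--\eqref{gath.eq3}. You simply spell out the details the paper leaves implicit (discreteness forcing strict associativity, the counit being unique into the terminal set, and the associativity square splitting into the three component equations), which is a faithful expansion rather than a different argument.
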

\begin{proof}
A discrete monoidal category equipped with an oplax monoidal functor $R$ to the cartesian category of sets is precisely a monoid $(C,1,*)$ equipped with a set $R[c]$ for each $c\in C$ and a function $R[a*b]\to R[a]\times R[b]$ for each $a,b\in C$.
The associativity and unitality conditions for an oplax monoidal functor correspond precisely to the cancellation equations for a collective.
\end{proof}

Here is another way to construct a collective from another ubiquitous categorical construction: an operad.

\begin{proposition}\label{prop.operad}
Let $\cat{O}$ be an operad (with one object), so that $\cat{O}_n$ denotes the set of $n$-ary operations for any $n\in\nn$. Then the polynomial $\sum_{n\in\nn}\cat{O}_n\yon^{n}$ carries a collective structure.
\end{proposition}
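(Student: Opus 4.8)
The plan is to construct the collective by hand: use the operad's composition to define the aggregation and a purely combinatorial bijection to define the distribution, then read the cancellation equations off the operad axioms. Write a contribution as an operation $\phi\in\cat{O}_m$ of arity $m\in\nn$, so the set of contributions is $\coprod_{m\in\nn}\cat{O}_m$ and the returns on $\phi$ are its input slots, $R[\phi]=\ord{m}$. Take the neutral contribution to be the operadic unit $\id\in\cat{O}_1$, whose return set $\ord{1}$ is a singleton. For the aggregation of $\phi\in\cat{O}_m$ with $\psi\in\cat{O}_n$, I would copy $\psi$ into each of the $m$ input slots of $\phi$ and compose, landing in $\cat{O}_{mn}$; this is the ``copy-compose'' operation, and its output arity $mn$ is literally the sum $n+\cdots+n$ of the arities of the $m$ copies of $\psi$ (hence ``add''). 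The inputs of this composite are indexed by pairs $(i,j)$ with $i\in\ord{m}$ and $j\in\ord{n}$---the $j$-th input of the copy of $\psi$ in the $i$-th slot of $\phi$---which gives a canonical bijection $\ord{mn}\cong\ord{m}\times\ord{n}$. I would then define the distribution to be the two projections of this bijection:
\[
\frac{\phi}{\phi\ast\psi}(i,j)=i \qquad\text{and}\qquad \frac{\psi}{\phi\ast\psi}(i,j)=j.
\]
Note the distribution does not depend on $\cat{O}$; the operad enters only through which element of $\cat{O}_{mn}$ the aggregation selects.

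First I would check that $\bigl(\coprod_m\cat{O}_m,\ \id,\ \ast\bigr)$ is a monoid. Unitality is immediate from the operad's unit axioms: composing $\phi$ with $\id$ in every slot gives $\phi$, and composing $\id$ with a single copy of $\psi$ gives $\psi$, so $\phi\ast\id=\phi=\id\ast\phi$. Associativity of $\ast$ is precisely operad associativity: for $\chi\in\cat{O}_k$, both $(\phi\ast\psi)\ast\chi$ and $\phi\ast(\psi\ast\chi)$ unwind by copy-compose into the same triply-nested composite of $\phi$ with copies of $\psi$ with copies of $\chi$, which the operad associativity law identifies as one element of $\cat{O}_{mnk}$.

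Next I would verify the cancellation equations. Equations \eqref{gath.eq1} come from the same unit computation: when one factor is $\id$ the aggregate is the other factor, and since $\ord{1}$ is a singleton the bijection $\ord{mn}\cong\ord{m}\times\ord{n}$ degenerates so that the relevant projection is the identity on $R[\phi]$. The coassociativity equations \eqref{gath.eq2} and \eqref{gath.eq3} amount to the commuting square in the coassociativity remark with $a=\phi$, $b=\psi$, $c=\chi$: on positions the two legs agree by the operad associativity just invoked, while on returns both legs are the canonical bijection $\ord{mnk}\cong\ord{m}\times\ord{n}\times\ord{k}$ followed by projections. This is the same combinatorial identity---that iterating $\ord{xy}\cong\ord{x}\times\ord{y}$ is associative---that underlies \cref{ex.comm_list_monoid}; indeed, up to the indexing convention, this collective restricts to that one when $\cat{O}$ is the terminal operad. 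I would not attempt to prove commutativity \eqref{eqn.comm_dis}, as this collective (like \cref{ex.comm_list_monoid}) is generally noncommutative.

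The step I expect to be the main obstacle is bookkeeping rather than anything conceptual: I must fix one convention for ordering the $mn$ inputs of a copy-compose (a lexicographic order on the pairs $(i,j)$) and then confirm that this single convention makes both layers of the coassociativity square commute at once---the position layer, governed by operad associativity, and the return layer, governed by associativity of the iterated bijection. Once the indexing is pinned down compatibly, the rest is the routine translation of the operad axioms into the cancellation equations. Equivalently, and perhaps more cleanly, one can note that the data above is exactly a $\otimes$-monoid structure on $\sum_{n\in\nn}\cat{O}_n\yon^{\ord{n}}$ in $\poly$, with the operad's unit and associativity becoming the unit and associativity of that monoid.
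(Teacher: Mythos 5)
Your construction is correct, but it takes a genuinely different route from the paper's official proof. The paper proves the proposition by abstract transfer: an operad is a cartesian polynomial monad, i.e.\ a $\tri$-monoid structure $\yon\to p$ and $p\tri p\to p$ on $p=\sum_{n\in\nn}\cat{O}_n\yon^{\ord{n}}$, and composing these with the laxators of the lax monoidal functor $(\poly,\yon,\otimes)\to(\poly,\yon,\tri)$ --- the identity $\yon\to\yon$ and the natural map $p\otimes p\to p\tri p$ --- yields the $\otimes$-monoid, with the cancellation equations \eqref{gath.eq1}--\eqref{gath.eq3} holding automatically by the coherence of the lax monoidal structure. What you do instead is unwind this to an elementary, self-contained verification: copy-compose aggregation, projection distribution through the lexicographic bijection $\ord{mn}\cong\ord{m}\times\ord{n}$, unitality and associativity from the operad axioms, and coassociativity from the associativity of the iterated bijection $\ord{mnk}\cong\ord{m}\times\ord{n}\times\ord{k}$. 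This matches the informal description the paper gives \emph{after} its proof, but the paper never carries out those checks; you do, and you correctly isolate the one genuine bookkeeping point, namely that a single indexing convention must make both the position layer (operad associativity) and the return layer (the combinatorial bijection) of the coassociativity square commute simultaneously. Your hedge ``up to the indexing convention'' on the terminal-operad comparison is also exactly right: your lex convention gives $d\mapsto(d\bdiv n,\,d\bmod n)$, the transpose of the convention used in \cref{ex.comm_list_monoid}, and both are valid collectives. In short, your approach buys a proof readable without any $\poly$ background and with the convention pinned down explicitly; the paper's approach buys brevity and generality (the same transfer works uniformly for any cartesian polynomial monad, with coherence handled once and for all), and your closing observation that the data assembles into a $\otimes$-monoid on $\sum_{n\in\nn}\cat{O}_n\yon^{\ord{n}}$ is precisely where the two arguments meet.
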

\begin{proof}
For experts, this comes from the lax monoidal functor $(\poly,\yon,\otimes)\to(\poly,\yon,\tri)$, whose laxators are the identity $\yon\to\yon$ and the natural map $p\otimes q\to p\tri q$ (see \cite{poly_book}). Indeed, an operad is a special case of a cartesian polynomial monad, and we simply compose $p\tri p\to p$ and $\yon\to p$ with the laxators.
\end{proof}

Let's come back down to earth by explaining the idea and giving an example. The idea is that an operad $\cat{O}$ consists of a set $\cat{O}_n$ of $n$-ary operations for each $n$, which is closed under composition. We form the corresponding collective by defining a contribution to be a choice of $n\in\nn$ and an $n$-ary operation $f\in\cat{O}_n$. Say $e$ is $m$-ary and $f$ is $n$-ary. Then we get an $(m*n)$-ary operation to serve as their aggregate by
\[e*f\coloneqq e\circ(f,f,\ldots,f).\]
A return at $(n,f)$ is simply a number $1\leq i\leq n$. So given a return on $m*n$, we simply map it to a return on each factor via the projections
\[\{1,\ldots,m\}\from\{1,\ldots,m\}\times \{1,\ldots,n\}\to\{1,\ldots,n\}.\]
An example should further clarify.

\begin{example}[Probabilistic events]\label{ex.probabilities}
There is an operad of probabilistic events, from which we will get a collective. A probabilistic event is a pair $(n,p)$ where $n\in\nn$ and \[
p\in\{(a_1,\ldots,a_n)\in\rr^n_{\geq 0}\mid a_1+\cdots+a_n=1\}
\]
We'll denote $(n,p)$ by $(n:p)$ to mean ``$n$ events of probabilities $p_1,\ldots,p_n$.

For example an unfair coin-flip and a fair die-roll are both probabilistic events, the first one given by $(2:\frac{1}{3},\frac{2}{3})$ and the second given by $(6:\frac{1}{6},\frac{1}{6},\frac{1}{6},\frac{1}{6},\frac{1}{6},\frac{1}{6})$. Imagine each member of the collective contributes a probabilistic event: Alice wants to know what side will come up on her crazy coin, and Bob wants to know what number will come up on his fair die. We aggregate their questions according to the operad: we first flip the coin, and then we roll the die; this results in a probabilistic event of type $(12:\frac{1}{18},\ldots,\frac{1}{18},\frac{1}{9},\ldots,\frac{1}{9})$.

A return on $(n:p)$ is just a number $(1,\ldots,n)$, i.e. the return of type coin-flip is the answer of what was flipped! A result of the coin flip+die roll is distributed to Alice and Bob by returning the result of the coin flip to Alice and the result of the die roll to Bob.
\end{example}

Finally, here is a collective that appears in math of a slightly different flavor.
\begin{example}[Trajectories]\label{ex.traj}
Suppose that for every point $x=(x_1,x_2)\in\rr^2$ we pick a vector $v(x)=(v_1(x),v_2(x))\in\rr^2$, a direction to move. Assuming these vary continuously, the resulting data is called a \emph{vector field} on $\rr^2$. 
\begin{equation}\label{eqn.vector_field}
    \begin{tikzpicture}
        \begin{axis}[
            xmin = -2, xmax = 2,
            ymin = -2, ymax = 2,
            zmin = 0, zmax = 1,
            axis equal image,
            xtick distance = 1,
            ytick distance = 1,
            view = {0}{90},
            scale = 1.25,
            height=4cm
        ]
            \addplot3[
                point meta = {sqrt(x^2+y^2)},
                quiver = {
                    u = {-y/sqrt(x^2+y^2)},
                    v = {x/sqrt(x^2+y^2)},
                    scale arrows = 0.25,
                },
                -stealth,
                domain = -4:4,
                domain y = -4:4,
            ] {0};
        \end{axis}
    \end{tikzpicture}
\end{equation}
The vector fields on $\rr^2$ form the contributions of a collective, where \eqref{eqn.vector_field} shows one such contribution.%
\footnote{The vector fields on a more general thing, e.g.\ on a manifold $M$, also form a collective.} 
We will write $x+v\coloneqq(x_1+v_1,x_2+v_2)$ to denote the result of jumping out from the point $x$ according to the vector $v$.

The contributions in this collective are the vector fields, so we could imagine each member saying ``I don't know what point we're at exactly, but whatever it is, here's a formula for the direction I want to go.'' The neutral contribution is the 0-vector field, where $v_1(x)=v_2(x)=0$ for all $x\in\rr^2$. We denote the aggregate of contributions $v,w$ as $v\then w$; it is defined as follows
\[
 (v\then w)(x)=x+v(x)+w(x+v(x))
\]
In other words, starting at any point $x$, it chooses the vector given by jumping out from $x$ according to vector field $v$, and then jumping out from there (i.e.\ from the landing point $x+v(x)$) according to vector field $w$.

A return on a vector field $v$ is just a point in $\rr^2$; in other words, the return type does not depend on the contribution. However, the distribution function does. The formula is easiest to appreciate on more than just two contrubtions, so suppose given vector fields $t,u,v,w$, aggregated to $t\then u\then v\then w$, meaning ``jump by $t$ then by $u$ then by $v$ then by $w$''. A return on that aggregate is just a point $x_0\in\rr^2$, but it is distributed by
\begin{align*}
\frac{t}{t\then u\then v\then w}(x_0)&=x_0,\\
\frac{u}{t\then u\then v\then w}(x_0)&=x_0+t(x_0),\qquad\text{call it }x_1\\
\frac{v}{t\then u\then v\then w}(x_0)&=x_1+u(x_1),\qquad\text{call it }x_2\\
\frac{w}{t\then u\then v\then w}(x_0)&=x_2+v(x_2),\qquad\text{call it }x_3
\end{align*}
In other words, a point $x_0\in\rr^2$ is distributed to the contributing members $t,u,v,w$ by doing exactly what was said: jumping out from $x_0$ according to their vector fields in that order. It returns $x_0$ to $t$, jumps out by $t(x_0)$ and return the result $x_1$ to $u$, jumps out by $u(x_1)$ and returns the result $x_2$ to $v$, and jumps out by $v(x_2)$ and returns the result $x_3$ to $w$.
\end{example}

\chapter{New collectives from old}
\label{sec:making}

In this section, we present several ways of constructing new collectives from old. Throughout the following, we let $\mathscr{C}$ and $\mathscr{D}$ be two collectives with contribution sets $C$ and $D$, return sets $R[c]$ and $S[d]$ for each $c\in C$ and $d\in D$, neutral elements $\e_C\in C$ and $\e_D\in D$, aggregations $*\colon C\times C\to C$ and $\diamond\colon D\times D\to D$, and the corresponding notations $\dfrac{c}{c*c'}$, $\dfrac{c'}{c*c'}$ and $\dfrac{d}{d\diamond d'}$, $\dfrac{d'}{d\diamond d'}$ for distributions.
Denoting their interaces in polynomial form, we write
\begin{gather*}
    \mathscr{C}:=\left(\sum_{c\in C} \yon^{R[c]},\,\e_C,\,*\right) \qqand 
    \mathscr{D}:=\left(\sum_{d\in D} \yon^{S[d]},\,\e_D,\,\diamond\right)
\end{gather*}

In the following subsections we discuss operations on collectives that are easy to describe in terms of polynomial functors. Since we don't assume our readers know anything about polynomial functors, our goal instead is to explain what these operations do intuitively. However, we quickly explain for experts (see \cite{poly_book}):
\begin{itemize}
    \item \cref{sec.parallel}: the $\otimes$-structure (Dirichlet product) on $\poly$ lifts to $\otimes$-monoids;
    \item \cref{sec.product}: the $\times$-structure (Cartesian product) on $\poly$ lifts to $\otimes$-monoids;
    \item \cref{sec.composite}: the $\tri$-structure (composition product) on $\poly$ lifts to $\otimes$-monoids; and
    \item \cref{sec.free}: the forgetful functor from $\otimes$-monoids to $\poly$ has a left adjoint (existence of free collectives).
\end{itemize}
 Some of these have a more intuitive meaning than others, as we will see.
 
\section{Parallel collectives}\label{sec.parallel}
Given two collectives, one way to combine them is such that ``the two protocols happen in parallel.'' More precisely there is a new collective, which we call the \emph{parallel collective} and denote $\mathscr{C}\otimes\mathscr{D}$, with interface
\[
    \sum_{(c,d)\in C\times D} \yon^{R[c]\times S[d]},
\]
i.e.\ with contribution set $C\times D$ and returns $R[c]\times S[d]$ for each $c,d$. The rest of its structure is as follows:
\begin{itemize}
    \item its neutral contribution is $(\e_C,\e_D)\in C\times D$;
    \item its aggregation sends $((c,d),(c',d'))\mapsto (c*c', d\diamond d')$;
    \item its distribution $R[c*c']\times S[d\diamond d'] \to (R[c]\times S[d])\times (R[c']\times S[d'])$ sends
    \[
        (r,s) \mapsto \left(\left(\frac{c}{c*c'}(r), \frac{d}{d\diamond d'}(s)\right), \left(\frac{c'}{c*c'}(r), \frac{d'}{d\diamond d'}(s)\right)\right).
    \]
\end{itemize}
It behaves like the two original collectives $\mathscr{C}$ and $\mathscr{D}$ acting in parallel, in the following sense:
\begin{itemize}
    \item A contribution of $\mathscr{C}\otimes\mathscr{D}$ is just a pair $(c,d)$ of contributions, one from $\mathscr{C}$ and another from $\mathscr{D}$; to aggregate contributions from a list of members, we just aggregate the $\mathscr{C}$-parts and the $\mathscr{D}$-parts independently.
    \item A return on a contribution $(c,d)$ is just a pair of returns, one from $R[c]$ and one from $S[d]$. Given a list of contributions in $\mathscr{C}\otimes\mathscr{D}$ and a return on their aggregate, we distribute returns to each member by distributing the return on the $\mathscr{C}$-part and the return on the $\mathscr{D}$-part independently.
\end{itemize}

For experts, if $\mathscr{C}=(p,\e_p,*)$ and $\mathscr{D}=(q,\e_q,\diamond)$ are $\otimes$-monoids, the relevant monoidal product on their Dirichlet product $p\otimes q$ follows from the symmetry of $\otimes$: \[(p\otimes q)\otimes (p\otimes q)\cong(p\otimes p)\otimes (q\otimes q)\To{*\otimes\diamond} p\otimes q,\]
and similarly for monoidal unit
\[
\yon\cong\yon\otimes\yon\To{\e_p\otimes\e_q}p\otimes q.
\]

\section{Product collectives}\label{sec.product}

Another way to combine two collectives is such that ``the contributions happen in parallel, but only one return is distributed.'' More precisely there is a new collective, which we call the \emph{product collective} and denote $\mathscr{C}\times\mathscr{D}$, with interface
\[
    \sum_{(c,d)\in C\times D} \yon^{R[c]+S[d]},
\]
i.e.\ with contribution set $C\times D$ and returns $R[c]+ S[d]$ for each $c,d$.\footnote{Here $R[c]+S[d]$ denotes the disjoint union of sets $R[c]$ and $S[d]$.} The rest of its structure is as follows:
\begin{itemize}
    \item its neutral contribution is $(\e_C,\e_D)\in C\times D$;
    \item its aggregation sends $((c,d),(c',d'))\mapsto (c*c', d\diamond d')$;
    \item its distribution $R[c*c']+ S[d\diamond d'] \to (R[c]+ S[d])\times (R[c']+ S[d'])$ works by cases, either $r\in R[c*c']$ or $s\in S[d\diamond d']$:
    \begin{align*}
        r &\mapsto \left(\frac{c}{c*c'}(r), \frac{c'}{c*c'}(r)\right)\\
        s &\mapsto \left(\frac{d}{d\diamond d'}(s),
        \frac{d'}{d\diamond d'}(s)\right).
    \end{align*}
\end{itemize}
On each round, each contributing member produces a contribution of both sorts, but altogether they only receive a return from one or the other of them.

For experts, if $\mathscr{C}=(p,\e_p,*)$ and $\mathscr{D}=(q,\e_q,\diamond)$ are $\otimes$-monoids, the relevant monoidal product on their Cartesian product $p\times q$ follows from the universal property of products: \[(p\times q)\otimes (p\times q)\to(p\otimes p)\times (q\otimes q)\To{*\times \diamond} p\times q,\]
and the monoidal unit is given by the diagonal
\[\yon\to\yon\times \yon\To{\e_p\times\e_q} p\times q.\]

\section{Composite collectives}\label{sec.composite}

Given two collectives, a third---and much more intricate---way to combine them is such that ``the two protocols happen in series.'' More precisely there is a new collective, which we call the \emph{composite collective} and denote $\mathscr{C}\tri\mathscr{D}$, with interface
\[
    \sum_{c\in C}\;\sum_{f\colon R[c]\to D} \yon^{\sum_{r\in R[c]} S[f(r)]},
\]
i.e.\ for which a contribution is a pair $(c,f)$ where $c$ is a $\mathscr{C}$-contribution and $f\colon R[c]\to D$ is a function that takes any $\mathscr{C}$-return $r$ and produces a $\mathscr{D}$-contribution $f(r)\in D$, and for which a return is a pair $(r,s)$, where $r\in R[c]$ and $s\in S[f(r)]$ are returns. The rest of its structure is as follows:
\begin{itemize}
    \item its neutral contribution is $(\e_C,r\mapsto \e_D)\in \sum_{c\in C}\smset(R[c], D)$;
    \item its aggregation sends \[\big((c,f),(c',f')\big)\mapsto \left(\left(c*c'\right), r\mapsto \left(f\left(\frac{c}{c*c'}(r)\right)\diamond f'\left(\frac{c'}{c*c'}(r)\right)\right)\right);\]
    \item its distribution sends $(r,s)$, where $r\in R[c*c']$ and $s\in S[f(\frac{c}{c*c'}(r))\diamond f'(\frac{c'}{c*c'}(r))]$, to
    \[
        (r,s)\mapsto
        \left(
            \left(
                \frac{c}{c*c'}(r),
                \frac{c'}{c*c'}(r)
            \right),
            \left(
            \tilde{s},\tilde{s}'
            \right)
        \right)
    \]
    where
    \[
    \tilde{s}=\frac{f\left(\frac{c}{c*c'}(r)\right)}{f\left(\frac{c}{c*c'}(r)\right)\diamond f'\left(\frac{c'}{c*c'}(r)\right)}
    \qqand
    \tilde{s}=\frac{f'\left(\frac{c'}{c*c'}(r)\right)}{f\left(\frac{c}{c*c'}(r)\right)\diamond f'\left(\frac{c'}{c*c'}(r)\right)}
    \]
\end{itemize}
This looks complicated, but intuitively it behaves like the two original collectives $\mathscr{C}$ and $\mathscr{D}$ acting in series. That is,
\begin{itemize}
    \item A contribution to the composite is a $\mathscr{C}$-contribution and a strategy for taking a $\mathscr{C}$-return on that contribution and producing a $\mathscr{D}$-contribution. When a list of members produce these $(c,f)$ pairs, they are aggregated by first aggregating the $c$'s to form some $c'$ and second by giving the following strategy: for any return $r$ on $c'$, we will distribute it to all the members, see what $\mathscr{D}$-contribution each of their strategies says to produce, and aggregate these.
    \item A return on $(c,f)$ is just a pair $(r,s)$ where $r\in R[c]$ is a $\mathscr{C}$-return on $c\in C$ and $s\in S[f(r)]$ is a return on the subsequent contribution $f(r)\in D$. To distribute the return $(r,s)$ on an aggregate contribution, distribute $r$ to the members using $\mathscr{C}$'s distribution function, and then distribute $s$ to the members using $\mathscr{D}$'s distribution.
\end{itemize}

For experts, if $\mathscr{C}=(p,\e_p,*)$ and $\mathscr{D}=(q,\e_q,\diamond)$ are $\otimes$-monoids, the relevant monoidal product on their composite $p\tri q$ follows from the duoidality of $\tri$ and $\otimes$:
\begin{equation}\label{eqn.duoidal_tensor}
    (p\tri q)\otimes (p\tri q)\to(p\otimes p)\tri (q\otimes q)\To{*\tri\diamond} p\tri q,
\end{equation}
and similarly for monoidal unit
\begin{equation}\label{eqn.duoidal_unit}
\yon\cong\yon\tri\yon\To{\e_p\tri\e_q}p\tri q.
\end{equation}
\section{Free collectives}\label{sec.free}

Suppose $C\in\smset$ is an arbitrary set and similarly that $R[c]\in\smset$ is an arbitrary set for each $c\in C$; we think of each $c\in C$ as an ``atomic'' contribution and each $r\in R[c]$ as an ``atomic'' return.
We write this in polynomial form as $p:=\sum_{c\in C}\yon^{R[c]}$.
While $p$ itself need not be the interface for any collective, we can construct the \emph{free} collective $p^*$ on $p$. Its contribution set 
\[C^*:=\sum_{n\in\nn}C^n
\]
has as its elements all lists $(c_1,c_2,\ldots,c_n)$ of arbitrary finite length in $C$. For each such list, a return is again a list---of the same length---in the corresponding return sets:
\[
R^*[(c_1,c_2,\ldots,c_n)]:=R[c_1]\times R[c_2]\times\cdots\times R[c_n].
\]
The neutral element is the empty list $()$, whose set of returns is the empty product $R^*[()]=1$. The aggregation operation $\plpl\colon C^*\times C^*\to C^*$ is just list concatenation:
\[
(c_1,\cdots,c_n)\plpl(c'_1,\ldots,c'_{n'})\coloneqq(c_1,\cdots,c_m,c'_1,\ldots,c'_{n'}).
\]
Say we have lists $\ell,\ell'$ of lengths $n$ and $n'$, with aggregate $\ell\plpl\ell'$ of length $n+n'$. The distribution subdivides a length $n+n'$ list of returns into two lists, one of length $n$ and one of length $n'$:
\begin{equation}\label{eqn.free_list_distribution}
\frac{\ell}{\ell\plpl\ell'}(r_1,\cdots,r_{n+n'})=(r_1,\cdots,r_n)
\qand
\frac{\ell'}{\ell\plpl\ell'}(r_1,\cdots,r_{n+n'})=(r_{n+1},\cdots,r_{n+n'}).
\end{equation}
This completes the description of the free collective. 

For experts, we have just described the left adjoint to the forgetful functor from collectives to polynomials.

Here is an interpretation of the semantics of the free collective on a polynomial $p=\sum_{c\in C}\yon^{R[c]}$. For each atomic contribution $c\in C$, think of $R[c]$ as ``what the members want to know or hear about'' when they give that contribution.

The free collective on $p$ has as its contributions all lists of atomic contributions. So each member of the collective contributes a list of atomic contributions, e.g. Bob contributes an banana and two paperclips $(b,c,c)$. The contributions from all the contributing members are aggregated by simply concatenating their lists into one big list.

A return is just a list---of exactly the same length---with one return for each atomic contribution. Suppose the collective has decided that members want to see someone's face when they offer a banana and a picture of a paper stack when they offer a paperclip. Then out of the whole list of returns for the big list, Bob will receive a list $(f,s_1,s_2)$ of a facial expression and two pictures of paper stacks.

\begin{example}[Multi-question survey]\label{ex.multi-question}
As promised, free collectives will allow us to turn our single-question survey from \cref{ex.comm_list_monoid} into a multi-question survey.

If we take $p$ to be the interface of our single-question survey collective
\[
    p\coloneqq\sum_{n\in\nn}\yon^{\ord{n}}=\yon^\0+\yon^\1+\yon^\2+\cdots,
\]
then the free collective on $p$ has interface
\[
    \sum_{k\in\nn}\,\sum_{n\in\nn^k}\yon^{\ord{n}_1\times\cdots\times \ord{n}_k}.
\]
A contribution is a finite list of natural numbers, and the aggregation is concatenation, with the empty list $()$ as the unit.
The possible returns on a contribution $(n_1,\ldots,n_k)$, with each $n_j\in\nn$, are lists $(m_1,\ldots,m_k)$ with each $m_j\in\nn$ satisfying $m_j<n_j$.
The distribution is then given by list subdivision according to the lengths of the original contributions, as in \eqref{eqn.free_list_distribution}.

We can interpret this collective as follows.
Given $n$ interviewers, the $i^\text{th}$ interviewer contributes a survey with $k_i\in\nn$ questions, such that the $j^\text{th}$ question has $n_{i,j}\in\nn$ possible answers to choose from.
These questions are aggregated in the order given into a single survey with $\sum_{i\in\ord{n}} k_i$ questions total; or, equivalently, an $n$-part survey, where the $i^\text{th}$ part has the $k_i$ questions contributed by the $i^\text{th}$ interviewer.
Then the $j^\text{th}$ question in the $i^\text{th}$ part has $n_{i,j}\in\nn$ possible answers to choose from, labeled with the natural numbers less than $n_{i,j}$.

When the survey is filled out, an answer $m_{i,j}\in\nn$ with $m_{i,j}<n_{i,j}$ is chosen for the $j^\text{th}$ question in the $i^\text{th}$ part.
Then the answers for all the questions in the $i^\text{th}$ part are distributed back to the $i^\text{th}$ interviewer.
\end{example}

\chapter{Future Directions}
\label{sec:directions}

There are two main avenues for future work on collectives: developing the theory and giving examples. On the theory side, we hope to further investigate the properties of the category of collectives, i.e. the category of $\otimes$-monoids in $\poly$ and the monoid morphisms between them. Moreover, we wish to give intuitive interpretations of these collective morphisms, as well as categorical concepts such as general limits and colimits of collectives.

There are several natural ways to generalize collectives that may be of interest.
A collective can be viewed as a $\poly$-enriched category with one object; so a natural generalization is to consider $\poly$-enriched categories with multiple objects.
We could also consider (left or right) modules or bimodules over collectives or bimodules over collectives.
These may have intuitive interpretations as well.

We can also equip collectives with other structures and investigate their categorical properties and intuitive interpretations.
While a $\otimes$-monoid in $\poly$ is a collective, a $\otimes$-comonoid in $\poly$ is just a set of monoids.
But a $\otimes$-bimonoid is a collective that comes equipped with a compatible $\otimes$-comonoid structure.
From the duoidality of $\otimes$ and $\tri$ given in $\eqref{eqn.duoidal_tensor}$ and $\eqref{eqn.duoidal_unit}$, we can also consider $(\otimes, \tri)$-bimonoids, which are $\otimes$-monoids equipped with a compatible $\tri$-comonoid structure.
In fact, in \cite{ahman2016directed}, Ahman and Uustalu showed that $\tri$-comonoids are just categories.
So $(\otimes, \tri)$-bimonoids can be thought of as \emph{collective categories}: categories for which there is a collective whose contributions are objects and whose returns are outgoing morphisms, and the structures cohere in a certain way.
Some of the examples of collectives in this paper are in fact collective categories, but the concept deserves further study.

On the examples side, we believe we have just scratched the surface, in terms of the sorts of examples that may exist and be of societal interest. New technologies like blockchain and DAOs offer the possibility of instantiating some of these ideas as working social structures. Other collectives could contribute automated problem-solving. For example, one could imagine a collective in which contributions were something like universal Turing machines. In algorithmic probability theory (also known as Solomonoff induction), one imagines predicting bit strings according to which ones can be implemented using the shortest programs; it turns out not to matter what language the programs are written in: they all are within a constant bound of each other in terms of prediction discrepancies. We do not need the details here, because the point is simply to imagine a collective of universal predictors. Each member contributes a prediction about their local environment, and these are aggregated to a prediction about the larger or more abstract environment. Errors in the aggregate prediction are distributed to the individual members. Clearly, the story is completely unfinished, and would benefit from attention by experts in Solomonoff induction, Bayesian learning, and so on; however we imagine that something of this sort---a collective structure on universal predictors---may be possible.

\printbibliography

\end{document}